\documentclass[11pt,reqno]{amsart}

\usepackage[T1]{fontenc}
\usepackage{lmodern}
\usepackage{microtype}
\usepackage{amsmath,amssymb,mathtools}
\usepackage{booktabs}
\usepackage{enumitem}
\usepackage{xcolor}
\usepackage{tikz}
\usepackage{aliascnt}
\usepackage[margin=1.15in]{geometry}
\usepackage[colorlinks=true,linkcolor=blue!55!black,citecolor=green!40!black,
  urlcolor=blue!60!black]{hyperref}
\usepackage[nameinlink,capitalise,noabbrev]{cleveref}

\newtheorem{theorem}{Theorem}[section]
\newaliascnt{proposition}{theorem}
\newtheorem{proposition}[proposition]{Proposition}
\aliascntresetthe{proposition}
\newaliascnt{lemma}{theorem}
\newtheorem{lemma}[lemma]{Lemma}
\aliascntresetthe{lemma}
\newaliascnt{corollary}{theorem}

\aliascntresetthe{corollary}
\theoremstyle{remark}
\newaliascnt{remark}{theorem}
\newtheorem{remark}[remark]{Remark}
\aliascntresetthe{remark}
\newaliascnt{example}{theorem}

\aliascntresetthe{example}
\theoremstyle{definition}
\newaliascnt{definition}{theorem}
\newtheorem{definition}[definition]{Definition}
\aliascntresetthe{definition}

\newcommand{\F}{\mathbb F}
\newcommand{\C}{\mathbb C}
\newcommand{\Z}{\mathbb Z}
\newcommand{\Tr}{\operatorname{Tr}}
\newcommand{\tr}{\operatorname{tr}}
\newcommand{\Frob}{\operatorname{Frob}}
\newcommand{\Fr}{\operatorname{Fr}}
\newcommand{\Dih}{\operatorname{Dih}}
\newcommand{\Cay}{\operatorname{Cay}}

\newcommand{\PG}{\operatorname{PG}}
\newcommand{\ol}[1]{\overline{#1}}

\title[Character sums and Ramanujan double covers]{Character sums on an oriented Singer conic and explicit Ramanujan double covers}
\author{Pin-Chi Hung}
\address{Department of Mathematics, Soochow University, Taipei City, Taiwan}
\email{pinchihung1111@gmail.com}

\author{Ming-Hsuan Kang}
\address{Department of Applied Mathematics, National Yang Ming Chiao Tung University, Hsinchu, Taiwan}
\email{kmsming@gmail.com}

\date{}
\hypersetup{
  pdftitle={Character sums on an oriented Singer conic and explicit Ramanujan double covers},
  pdfauthor={Pin-Chi Hung and Ming-Hsuan Kang},
  pdfsubject={Finite-field character sums and explicit Ramanujan double covers},
  pdfkeywords={Ramanujan graph, graph cover, Cayley graph, Singer difference set,
    finite-field character sum, quaternion group}
}

\subjclass[2020]{11T24, 05B10, 05C50, 14G15, 51E20}
\keywords{Ramanujan graph, graph cover, Cayley graph, Singer difference set,
  finite-field character sum, quaternion group}

\begin{document}

\begin{abstract}
Let $q$ be odd.  The trace conic in $\F_{q^3}$ determines a Singer
difference set in $\F_{q^3}^\times/\F_q^\times$ and a natural
square-class lift to $\F_{q^3}^\times/\F_q^{\times2}$.  We study the odd
multiplicative Fourier coefficients of this lift and prove that they are
bounded in absolute value by $2\sqrt q$.  The proof improves the naive
six-puncture Weil bound by exploiting a projective Klein-four symmetry of
the associated rank-one local system.  The resulting nontrivial cocycle
produces a quaternionic action on its four-dimensional cohomology, while
Frobenius symmetry reduces the relevant trace to two Weil-scale
eigenvalues.

As an application, the oriented conic yields an explicit Singer-invariant
signing of the point--line incidence graph of $\PG(2,q)$.  The corresponding
dihedral Cayley graph is a connected Ramanujan double cover.  Thus a conic
lift already known in finite-geometric constructions has an additional
Ramanujan spectral property governed by its odd multiplicative character
sums.
\end{abstract}

\maketitle

\section{Introduction}

The point--line incidence graph of $\PG(2,q)$ is a classical Ramanujan
graph.  Our goal is to construct, for every odd prime power $q$, an explicit
double cover that is again Ramanujan and remains a Cayley graph.  The
construction comes from a natural lift of a trace conic over $\F_q$, and
the Ramanujan property reduces to a multiplicative character-sum estimate.
The main arithmetic problem is to prove the bound $2\sqrt q$ for the
relevant character sums.

We now describe the finite-field construction.  Let
\[
  F=\F_q,\qquad E=\F_{q^3},\qquad N=q^2+q+1,
\]
where $q$ is an odd prime power, and write $\Tr=\Tr_{E/F}$.  The quadratic
form
\[
  Q(x)=\Tr(x^2)
\]
defines a nonsingular conic $\mathcal C$ in $\PG(2,q)$.  In Singer coordinates, its
point set is the difference set
\[
  D_Q=\{\bar x\in E^\times/F^\times: \Tr(x^2)=0\}.
\]
The natural quotient
\[
  E^\times/F^{\times2}\longrightarrow E^\times/F^\times,
  \qquad F^{\times2}:=\{a^2:a\in F^\times\},
\]
is two-to-one.
Fixing an isotropic vector $e$, define a square-class lift of the conic by
\begin{equation}\label{eq:intro-orientation}
  \ell_e([x])=
  \begin{cases}
    F^{\times2}x\Tr(ex),&[x]\ne[e],\\[1mm]
    F^{\times2}(2e),&[x]=[e].
  \end{cases}
\end{equation}
Changing $e$ changes this section, at most, by one global nonsquare.

In exponent notation, the lift in \eqref{eq:intro-orientation} is the
conic set introduced by Feng, Momihara, and Xiang in their construction of
Cameron--Liebler line classes \cite[Section~3]{FengMomiharaXiang}, and
revisited by Momihara and Xiang in the study of cyclic arcs and strongly
regular Cayley graphs \cite[Section~2.3]{MomiharaXiang2022}.  Those works
establish, among other properties, additive-character identities for the
lift.  Here we instead study the part of its \emph{multiplicative} Fourier
spectrum that is odd with respect to the square-class double cover.

Write $\mathcal C^+=\ell_e(\mathcal C)$ for either coherent orientation of
the conic.  If
\[
  \lambda:E^\times\longrightarrow\C^\times,
  \qquad \lambda|_{F^\times}=\eta_F,
\]
then $\lambda$ is trivial on $F^{\times2}$ and hence is well defined on
$\mathcal C^+$.  Put
\begin{equation}\label{eq:intro-S-lambda}
  S_\lambda=\sum_{\xi\in\mathcal C^+}\lambda(\xi).
\end{equation}
Our main arithmetic result is the following Ramanujan-scale square-root estimate.

\begin{theorem}[Oriented-conic period bound]\label{thm:period-bound}
For every multiplicative character $\lambda:E^\times\to\C^\times$ with
$\lambda|_{F^\times}=\eta_F$, one has
\[
  |S_\lambda|\le2\sqrt q.
\]
\end{theorem}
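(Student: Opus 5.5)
The plan is to turn $S_\lambda$ into a one-variable multiplicative character sum over $\mathbb P^1(F)$, write it as a Frobenius trace on the cohomology of a rank-one local system, and then use extra symmetry to show that only two Weil-size eigenvalues contribute.

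\emph{Step 1: parametrize the conic.} Since $e$ is isotropic, the points of $\mathcal C$ other than $[e]$ admit a rational parametrization
\[
x(t)=a+bt+ct^2,\qquad t\in F\cup\{\infty\},
\]
with $a,b,c\in E$ and $x(\infty)$ chosen suitably. With this normalization, $\Tr(e\,x(t))$ is an $F$-valued polynomial of degree at most one in $t$. The sum then becomes
\[
S_\lambda=\lambda(2e)+\sum_{t}\lambda\bigl(x(t)\bigr)\,\eta_F\bigl(\Tr(e\,x(t))\bigr).
\]
Here I use that $\lambda|_{F^\times}=\eta_F$ to pull the scalar $\Tr(ex)$ out as a quadratic character. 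I will check directly that the extra term at $[e]$ is exactly the value that the resulting sheaf takes at the corresponding puncture or fibre. This should let $S_\lambda$ be read as a complete sum.

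\emph{Step 2: pass to the geometric sum.} Over $\bar F$ we have $E\otimes\bar F\cong\bar F^3$. Accordingly $\lambda(x(t))=\prod_{i=1}^{3}\chi_i(x_i(t))$, where the $\chi_i$ are Frobenius-conjugate characters and each $x_i$ is a quadratic polynomial. This gives a Kummer sheaf $\mathcal L$ on $\mathbb P^1$ punctured at the six roots of the $x_i$ and at the zero of the linear factor. I will compute the local monodromies. The key check is that the product $\prod\chi_i$ restricted to $F^\times$ is $\eta_F$, and this combines with the $\eta_F$ factor so that $\mathcal L$ is tame and unramified at the seventh point and at $\infty$. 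Grothendieck--Ogg--Shafarevich then gives $\dim H^1_c=6-2=4$, with $H^0_c=H^2_c=0$. The cohomology is pure of weight one by Deligne, which is the naive bound $|S_\lambda|\le4\sqrt q$.

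\emph{Step 3: exploit the Klein-four symmetry (main obstacle).} The six punctures come in three pairs, namely the roots of $x_1,x_2,x_3$. I expect a Klein four-group $V\subset\mathrm{PGL}_2(\bar F)$ of Möbius transformations preserving this configuration. Such a group exists because the three pairs can be simultaneously normalized; for instance, the conjugate pair structure under Frobenius forces this. I would then show that $\sigma^*\mathcal L\cong\mathcal L$ for each $\sigma\in V$. The step I expect to be hardest is that the resulting isomorphisms $\phi_\sigma$ compose only up to scalars, and that the cocycle is nontrivial. This is shown by evaluating the $\chi_i$ at the fixed points of the involutions, where the sign is forced by $\eta_F(-1)$-type quadratic-character computations. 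The $\phi_\sigma$ then generate a quaternion group $Q_8$ acting on $H^1_c$ with $-1$ acting as $-1$. Hence
\[
H^1_c\cong W\otimes M,
\]
where $W$ is the two-dimensional faithful representation of $Q_8$ and $M$ is a two-dimensional multiplicity space.

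\emph{Step 4: reduce the trace.} Frobenius normalizes this $Q_8$-action, and in fact permutes the $\phi_\sigma$ compatibly with its action on the $x_i$. I would choose the Frobenius-fixed normalization of the $\phi_\sigma$ so that Frobenius commutes with $W$ up to a scalar. Then it acts as $\mathrm{id}_W\otimes F_M$, or as an explicit trace-$\pm1$ or trace-$0$ twist on the $W$ factor. The trace is therefore, up to a factor of absolute value at most one, the trace of $F_M$ on the two-dimensional space $M$. Its eigenvalues have absolute value $\sqrt q$, which yields $|S_\lambda|\le2\sqrt q$. If Frobenius instead conjugates the $Q_8$-action nontrivially, I would pass to the twisted trace and bound it in the same way. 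The essential point is that the action on the $W$ factor contributes trace of absolute value at most one rather than two.
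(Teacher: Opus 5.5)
Your architecture is the paper's: a six-punctured rank-one system with $\dim H^1_c=4$, a projective Klein-four action whose nontrivial cocycle gives $H\cong W\otimes M$, and then a Frobenius trace reduction. But Step~4, which is where the constant $2$ comes from, does not work as written.

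No rescaling of the $\phi_\sigma$ can make Frobenius commute with the quaternionic action up to scalars. Frobenius carries the line spanned by $\phi_{\tau_i}$ to the line spanned by $\phi_{\tau_{i+1}}$, and rescaling does not change these lines. Worse, in the untwisted case you list, $\Frob_q=\mathrm{id}_W\otimes F_M$, you get only $|S_\lambda|=2|\tr F_M|\le 4\sqrt q$. So your claim that the $W$-factor always contributes modulus at most one fails within your own list of cases.

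The missing argument is the one you defer to a hedge:
\begin{itemize}
\item Conjugation by Frobenius is an automorphism $\alpha$ of $\operatorname{End}(W)\cong M_2(\overline{\mathbb Q}_\ell)$ that cyclically permutes the three lines spanned by the lifts $I_1,I_2,I_3$ of the involutions.
\item Tracking signs through $I_1I_2=I_3$ gives $\alpha^3=1$, so $\alpha$ has exact order three.
\item By Skolem--Noether, $\alpha=\operatorname{Ad}(R)$. Normalizing $R^3=1$ and $\det R=1$ forces the eigenvalues of $R$ to be the two primitive cube roots of unity, so $\tr R=-1$.
\item Then $\Frob_q=R\otimes\Theta$, purity forces the eigenvalues of $\Theta$ to have modulus $\sqrt q$, and $|S_\lambda|=|\tr\Theta|\le 2\sqrt q$.
\end{itemize}
It is precisely the $3$-cycle on $\{\tau_0,\tau_1,\tau_2\}$ that produces the factor of modulus one.

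Step~3 also rests on incorrect justifications.
\begin{itemize}
\item \emph{Existence of $V$.} Three pairs of points on $\mathbb P^1$ admit such a Klein four-group only if they are pairwise harmonic, and a Frobenius orbit of pairs is not harmonic in general. The symmetry exists here because in split coordinates $\Tr(x^2)=x_0^2+x_1^2+x_2^2$. The coordinate sign changes therefore preserve the conic, and their fixed-point pairs are exactly the zero sets $Z_i$ of the $x_i$.
\item \emph{Nontriviality of the cocycle.} This is not governed by a sign like $\eta_F(-1)$, which depends on $q\bmod 4$, whereas the cocycle is nontrivial for every odd $q$. It comes from $a_0a_1a_2=-1$, i.e.\ from $\chi_0\chi_1\chi_2=\lambda^{1+q+q^2}=\eta_E$ being quadratic. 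This is essentially the fact you already invoke in Step~2. If the cocycle were trivial, $\mathcal L$ would descend along the \'etale quotient $U_{\ol F}\to U_{\ol F}/V\simeq\mathbb P^1_{\ol F}\setminus\{b_0,b_1,b_2\}$. Its monodromies $r_i$ there would satisfy $r_i^2=a_i$ and $r_0r_1r_2=1$, forcing $a_0a_1a_2=1$.
\item \emph{Minor point in Steps~1--2.} With $[e]$ placed at $t=\infty$, the factor $\Tr(e\,x(t))$ is constant, because the tangent at $[e]$ meets the conic only at $[e]$. So there is no seventh point. A genuinely linear factor would add ramification rather than cancel it.
\end{itemize}
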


The constant $2$ is the essential point. After the conic is split, the
character sum is naturally associated with a rank-one system having six
geometric singularities. A direct Weil bound therefore gives only
$4\sqrt q$. The special symmetry of the six singularities provides an
additional reduction, cutting the effective contribution in half and
yielding the required bound $2\sqrt q$. The mechanism behind this
improvement is developed in Section~\ref{sec:six-point-estimate}.

This is exactly the bound required for the Ramanujan property on the
graph side. Recall that a connected $k$-regular graph is Ramanujan if every
adjacency eigenvalue other than $k$ (and also $-k$ in the bipartite case)
has absolute value at most $2\sqrt{k-1}$. In particular, the Ramanujan
bound for a $(q+1)$-regular graph is $2\sqrt q$.

Let $\mathcal I_q$ be the point--line incidence graph of $\PG(2,q)$.
It is a $(q+1)$-regular bipartite Ramanujan graph, and the Singer action
identifies it with the dihedral Cayley graph
\[
  \mathcal I_q\cong
  \Cay\!\left(\Dih(C_N),\{sd:d\in D_Q\}\right),
  \qquad C_N=E^\times/F^\times.
\]
Put $C_{2N}=E^\times/F^{\times2}$ and let
$\ell_Q:D_Q\to C_{2N}$ be the lift in
Definition~\ref{def:lifted-D}. The odd characters of $C_{2N}$ are exactly
the characters occurring in \Cref{thm:period-bound}; their periods are
therefore the new eigenvalues of the lifted Cayley graph.

\begin{theorem}[Explicit Ramanujan double cover]\label{thm:main}
For every odd prime power $q$, the graph
\begin{equation}\label{eq:main-graph}
  \widetilde{\mathcal I}_q
  =
  \Cay\!\left(
    \Dih(C_{2N}),
    \{s\ell_Q(d):d\in D_Q\}
  \right)
\end{equation}
is a connected $(q+1)$-regular bipartite Ramanujan graph.  The quotient
$C_{2N}\to C_N$ induces a graph double cover
\[
  \widetilde{\mathcal I}_q\longrightarrow\mathcal I_q.
\]
Equivalently, $\ell_Q$ is an explicit Singer-invariant Ramanujan signing
of the incidence graph.
\end{theorem}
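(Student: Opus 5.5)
The plan is to deduce \Cref{thm:main} from \Cref{thm:period-bound} using the standard spectral analysis of dihedral Cayley graphs over abelian groups. Let $A$ be a finite abelian group, $T\subset A$, and $\Gamma=\Cay(\Dih(A),\{st:t\in T\})$ (well defined because $st$ is an involution). Then $\Gamma$ is bipartite, with parts $A$ and $sA$, and it is $|T|$-regular. The space of functions on $\Dih(A)$ splits into two-dimensional blocks indexed by $\chi\in\widehat A$, and on the block for $\chi$ the adjacency operator acts by $\begin{pmatrix}0&\widehat T(\chi)\\ \overline{\widehat T(\chi)}&0\end{pmatrix}$, where $\widehat T(\chi)=\sum_{t\in T}\chi(t)$. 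Consequently the spectrum is $\{\pm|\widehat T(\chi)|:\chi\in\widehat A\}$. I will verify this directly on the functions $f(a)=\chi(a)$, $f(sa)=c\,\chi(a)$; it is routine.

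\textbf{Cover.} The first step is to check that $\ell_Q$ is a section, i.e.\ that $\ell_Q(d)$ maps to $d$ under $C_{2N}\to C_N$. This holds because $\Tr(ex)\in F^\times$ for $[x]\neq[e]$: the point $[e]$ is the only conic point on the tangent line $\Tr(ex)=0$. Hence $\pi:\Dih(C_{2N})\to\Dih(C_N)$ is a group homomorphism that maps the connection set $\{s\ell_Q(d)\}$ bijectively onto $\{sd\}$. A homomorphism that restricts to a bijection on connection sets, together with $|\ker\pi|=2$, induces a $2$-to-$1$ local isomorphism of Cayley graphs. This gives the double cover, and the graph is $(q+1)$-regular since $|D_Q|=q+1$.

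\textbf{Spectrum.} Characters of $C_{2N}$ that are trivial on $F^\times/F^{\times2}$ factor through $C_N$, and they contribute exactly the spectrum of $\mathcal I_q$. That spectrum is $\pm(q+1)$ from the trivial character and $\pm\sqrt q$ from all others, because $D_Q$ is a $(N,q+1,1)$ difference set and so $|\widehat{D_Q}(\chi)|^2=q$. The remaining characters are the odd ones, meaning $\lambda|_{F^\times}=\eta_F$. For these, $\widehat T(\lambda)=\sum_{d}\lambda(\ell_Q(d))=S_\lambda$, which \Cref{thm:period-bound} bounds by $2\sqrt q$. I need to match the paper's $\ell_Q$ (Definition~\ref{def:lifted-D}) with $\ell_e$. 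By the remark after \eqref{eq:intro-orientation}, any two such lifts differ by at most a global nonsquare, which multiplies $S_\lambda$ by $\lambda(\text{nonsquare})=-1$ and so leaves $|S_\lambda|$ unchanged.

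\textbf{Connectivity.} This is the step that needs the most care. A bipartite regular graph is connected exactly when the eigenvalue $q+1$ has multiplicity one. Even characters give $|\widehat T|=q+1$ only for the trivial character, since all others give $\sqrt q<q+1$. For odd characters we have $|S_\lambda|\le2\sqrt q<q+1$ because $q\ge3$ (indeed $(\sqrt q-1)^2>0$). So $q+1$ is a simple eigenvalue, the graph is connected, and every nontrivial eigenvalue has absolute value at most $2\sqrt q$, which is the Ramanujan property. The signing statement is then just a reformulation: the new eigenvalues of the double cover are the eigenvalues of the signed adjacency matrix.
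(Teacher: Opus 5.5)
Your proposal is correct and follows the paper's proof essentially step for step: the section-to-cover correspondence (\Cref{prop:lift-cover}), the dihedral Fourier blocks (\Cref{prop:dihedral-fourier}), the old spectrum $\pm\sqrt q$, the period bound for the odd characters, and connectivity from $2\sqrt q<q+1$; for the old spectrum you use $|\widehat{D_Q}(\chi)|^2=q$ where the paper uses $MM^{\mathsf T}=qI+J$, which is the same fact. One cosmetic fix: under the paper's convention $g\sim gt$ one has $a\cdot st=s\,a^{-1}t$, so your test functions should be $f(a)=\chi(a)$, $f(sa)=c\,\overline{\chi(a)}$ (equivalently $f(as)=c\,\chi(a)$), because the invariant block pairs $\chi$ on $A$ with $\bar\chi$ on $sA$; this does not change the eigenvalues $\pm|\widehat T(\chi)|$.
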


Ramanujan graphs obtained from finite fields are often proved optimal by
turning their nontrivial eigenvalues into character sums and applying the
Riemann hypothesis for curves; see, for example,
\cite{FengLi,LiMeemark}.  The present construction follows this general
philosophy, but the character sum in \Cref{thm:period-bound} is not
controlled by the number of punctures alone.  The additional quaternionic
symmetry is what lowers the natural four-eigenvalue estimate to the
Ramanujan threshold.

From the graph-cover viewpoint, a signing of a graph determines a double
cover, and the new spectrum is the spectrum of the signed adjacency
matrix \cite{BiluLinial}.  Marcus, Spielman, and Srivastava proved that
every finite regular bipartite graph has a Ramanujan double cover
\cite{MSS}.  Their theorem is existential with respect to the geometry of
the present incidence graph.  Here the signing is given by the explicit
finite-field formula \eqref{eq:intro-orientation}, is constant on Singer
edge orbits, and the resulting cover is again a Cayley graph.

The construction is intrinsically tied to odd characteristic.  In
Section~\ref{sec:characteristic-two} we explain why the square-class cover
and the nonsingular trace conic both degenerate in characteristic two.
The remaining sections develop the Singer model and the intrinsic
orientation, reduce the new spectrum to \eqref{eq:intro-S-lambda}, prove
the six-point quaternion estimate, and apply it to the conic periods.

\section{Singer symmetry and Ramanujan sections}
\label{sec:singer-lift}

\subsection{Ramanujan sections of dihedral Cayley graphs}

For a finite cyclic group $A$, write
\[
  \Dih(A)=A\rtimes\langle s\rangle,
  \qquad
  s^2=1,
  \qquad
  sas=a^{-1}\quad(a\in A).
\]
Let $\Cay(H,S)$ denote the Cayley graph
with vertex set $H$ and edges $g\sim gt$ for $t\in S=S^{-1}$.

Suppose a bipartite Cayley graph is written as
\[
  G=\Cay\!\left(\Dih(C),\{sd:d\in D\}\right),
\]
where $C$ is cyclic.  Left multiplication by $C$ partitions the edges
into the orbits indexed by $d\in D$.  Hence a signing compatible with
this cyclic symmetry is determined by one sign for each generator $sd$,
rather than one sign for each edge.

Such signings can be encoded group-theoretically.  Let
\begin{equation}\label{eq:abstract-twofold-extension}
  1\longrightarrow\{1,\varepsilon\}
  \longrightarrow\widetilde C
  \overset{\pi}{\longrightarrow}C
  \longrightarrow1
\end{equation}
be a twofold extension of finite cyclic groups.  Choosing the sign of
$sd$ is equivalent, after fixing a reference lift, to choosing one of
the two elements of $\pi^{-1}(d)$.  This motivates the following
terminology.

\begin{definition}\label{def:ramanujan-section}
A \emph{section of $D$ through $\pi$} is a map
\[
  \ell:D\longrightarrow\widetilde C
  \qquad\text{such that}\qquad
  \pi(\ell(d))=d\quad(d\in D).
\]
Its image $\widetilde D=\ell(D)$ contains exactly one element above each
element of $D$.  The section determines the lifted dihedral Cayley graph
\begin{equation}\label{eq:abstract-section-graph}
  \widetilde G_\ell
  =
  \Cay\!\left(
    \Dih(\widetilde C),
    \{s\ell(d):d\in D\}
  \right).
\end{equation}
We call $\ell$ a \emph{Ramanujan section} if
$\widetilde G_\ell$ is connected and Ramanujan.
\end{definition}

\subsection{The Singer model of the incidence graph}

Put
\[
  F=\F_q,\qquad E=\F_{q^3},\qquad N=q^2+q+1,
\]
and write $\Tr=\Tr_{E/F}$.  Define
\[
  C_N=E^\times/F^\times,
  \qquad
  C_{2N}=E^\times/F^{\times2}.
\]
For $x\in E^\times$, use the quotient-class notation
\[
  \bar x=xF^\times\in C_N,
  \qquad
  \widetilde x=xF^{\times2}\in C_{2N}.
\]
The natural quotient is part of an exact sequence
\begin{equation}\label{eq:cyclic-exact-sequence}
  1\longrightarrow F^\times/F^{\times2}
  \longrightarrow C_{2N}
  \overset{\pi}{\longrightarrow} C_N
  \longrightarrow1.
\end{equation}
Since $q$ is odd, its kernel has order two.  Thus $|C_N|=N$ and
$|C_{2N}|=2N$.

Since $N$ is odd, squaring is an automorphism of $C_N$.  Label the points
and lines of $\PG(2,q)$ by
\[
  P_{\bar x}=Fx^2
\]
and
\[
  L_{\bar y}
  =
  \left\{
    Fz\in\PG(2,q):
    \Tr(y^2z)=0
  \right\}.
\]
Replacing $x$ or $y$ by an $F^\times$-multiple does not change the
corresponding point or line, so the labels are well defined.  They are
bijections: squaring permutes $C_N$, and the trace pairing on $E/F$ is
nondegenerate.

Define the conic Singer difference set
\[
  D_Q
  =
  \left\{
    \bar x\in C_N:
    \Tr(x^2)=0
  \right\}.
\]
This is well defined because replacing $x$ by $ax$, $a\in F^\times$,
multiplies $\Tr(x^2)$ by $a^2$.  The incidence relation is
\begin{equation}\label{eq:singer-incidence}
  P_{\bar x}\in L_{\bar y}
  \quad\Longleftrightarrow\quad
  \Tr((xy)^2)=0
  \quad\Longleftrightarrow\quad
  \bar x\,\bar y\in D_Q.
\end{equation}

Identify the two vertex classes with the two cosets of $C_N$ in
$\Dih(C_N)$ by
\[
  P_c\longleftrightarrow c,
  \qquad
  L_c\longleftrightarrow sc.
\]
For $c,d\in C_N$, the Cayley edge defined by the reflection $sd$ sends
$c$ to
\[
  c(sd)=s(c^{-1}d).
\]
The product of the point label $c$ and the line label $c^{-1}d$ is $d$.
Thus \eqref{eq:singer-incidence} gives
\begin{equation}\label{eq:incidence-cayley}
  \mathcal I_q
  \cong
  \Cay\!\left(
    \Dih(C_N),
    \{sd:d\in D_Q\}
  \right).
\end{equation}

Thus the symmetry-restricted problem for $\mathcal I_q$ is concrete:
find a Ramanujan section of
\[
  D_Q\subseteq C_N
\]
through the natural twofold quotient
\[
  \pi:C_{2N}\longrightarrow C_N.
\]
The remainder of this section constructs the candidate section; the next
section explains its geometric meaning.

The set $D_Q$ is a Singer $(N,q+1,1)$ difference set
\cite{Singer}.  In particular, it has $q+1$ elements; the preceding
Cayley realization is the classical difference-set development of the
projective plane.

The following proposition records the standard spectrum of the incidence
graph of a projective plane, viewed as the incidence graph of a symmetric
design; see \cite{Dembowski,BrouwerHaemers}.

\begin{proposition}[Classical incidence spectrum]\label{prop:base-cayley}
The Cayley graph in \eqref{eq:incidence-cayley} has spectrum
\[
  \{q+1,-(q+1),
    (\sqrt q)^{[N-1]},(-\sqrt q)^{[N-1]}\}.
\]
\end{proposition}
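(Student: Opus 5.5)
The plan is to compute the spectrum directly from the dihedral Cayley structure. We diagonalize the adjacency operator using the characters of $C_N$, and then use the difference-set property of $D_Q$ to evaluate the resulting Gauss-type periods.

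First, the adjacency operator $A$ of $\Cay(\Dih(C_N),\{sd:d\in D_Q\})$ sends a function $f$ to $(Af)(g)=\sum_{d}f(g\,sd)$. Write $f$ on the two cosets as a pair of functions $(u,v)$ on $C_N$, with $u(c)=f(c)$ and $v(c)=f(sc)$. By the computation $c(sd)=s(c^{-1}d)$ preceding \eqref{eq:incidence-cayley}, one has $(Af)(c)=\sum_d v(c^{-1}d)$. Similarly, $sc\cdot sd=c^{-1}d$ gives $(Af)(sc)=\sum_d u(c^{-1}d)$. For a character $\chi$ of $C_N$, the plane spanned by $(\chi,0)$ and $(0,\ol\chi)$ is $A$-invariant. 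Indeed, $\sum_d\ol\chi(c^{-1}d)=\chi(c)\,\ol{\chi(D_Q)}$ and $\sum_d\chi(c^{-1}d)=\ol\chi(c)\chi(D_Q)$, where $\chi(D_Q)=\sum_{d\in D_Q}\chi(d)$. On this plane $A$ acts by the matrix $\begin{pmatrix}0&\ol{\chi(D_Q)}\\ \chi(D_Q)&0\end{pmatrix}$, whose eigenvalues are $\pm|\chi(D_Q)|$. As $\chi$ ranges over the $N$ characters, these $2N$ functions form a basis of functions on $\Dih(C_N)$. Hence the spectrum is $\{\pm|\chi(D_Q)|:\chi\in\widehat{C_N}\}$.

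Second, we evaluate $|\chi(D_Q)|$. For $\chi$ trivial it equals $|D_Q|=q+1$, which gives $\pm(q+1)$. For $\chi$ nontrivial we use the fact that $D_Q$ is an $(N,q+1,1)$ difference set \cite{Singer}. In the group ring this says $D_QD_Q^{(-1)}=q\cdot1+C_N$. Applying $\chi$ gives $|\chi(D_Q)|^2=q+\chi(C_N)=q$, so each of the $N-1$ nontrivial characters contributes $\sqrt q$ and $-\sqrt q$. This yields exactly the stated multiset.

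The only step requiring care is the first, namely checking that the two-dimensional blocks are correctly paired as $(\chi,\ol\chi)$ under the reflection convention $sas=a^{-1}$, and that they span everything. The difference-set identity is standard and is taken from \cite{Singer}. The argument is self-contained and does not need \Cref{thm:period-bound}. It is the model for the lifted case, where odd characters of $C_{2N}$ produce the periods $S_\lambda$.
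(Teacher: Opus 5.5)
Your proof is correct, but it takes a different route from the paper's.

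\textbf{The paper's route.} It never uses the cyclic structure. It takes the point--line incidence matrix $M$ and uses the design identity $MM^{\mathsf T}=qI+J$. From this it reads off the singular values $q+1$ (once) and $\sqrt q$ (with multiplicity $N-1$). It then invokes the fact that a bipartite graph with square biadjacency matrix $M$ has eigenvalues $\pm$ the singular values of $M$.

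\textbf{Your route.} You diagonalize the adjacency operator by characters of $C_N$, which gives the $2\times 2$ blocks with eigenvalues $\pm|\chi(D_Q)|$. You then evaluate these through the group-ring identity $D_QD_Q^{(-1)}=q\cdot 1+C_N$.

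\textbf{Comparison.} The combinatorial input is the same: for the development of a cyclic difference set, the group-ring identity is exactly the circulant form of $MM^{\mathsf T}=qI+J$. The paper's version is shorter and applies verbatim to any projective plane of order $q$, indeed to any symmetric design.

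Yours needs the difference-set property of $D_Q$ specifically. The paper asserts this with a citation. It also follows directly from \eqref{eq:singer-incidence} and the axiom that two points lie on a unique line. Alternatively, $D_Q$ is the image of the classical Singer set $\{\bar z:\Tr(z)=0\}$ under the inverse of the squaring automorphism of $C_N$.

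In exchange, your approach identifies the Fourier eigenvector behind each eigenvalue. It is precisely the block computation the paper carries out only later, in \Cref{prop:dihedral-fourier}, for the lifted graph. There no exact identity of the form $MM^{\mathsf T}=qI+J$ is available, and the odd periods must instead be estimated through \Cref{thm:period-bound}.
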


\begin{proof}
If $M$ is the point--line incidence matrix, then
$MM^{\mathsf T}=qI+J$, where $I$ is the $N\times N$ identity matrix and
$J$ is the $N\times N$ all-ones matrix.  Hence the singular values of
$M$ are $q+1$ once and $\sqrt q$ with multiplicity $N-1$, which gives
the displayed spectrum.
\end{proof}

\subsection{The explicit conic section}

Consider the quadratic form and trace pairing
\[
  Q(x)=\Tr(x^2),
  \qquad
  B(x,y)=\Tr(xy)
\]
on the three-dimensional $F$-space $E$.  The set $D_Q$ is the projective
conic $Q=0$ written inside $C_N=E^\times/F^\times$.

Fix $d_0=\bar x_0\in D_Q$ together with a representative
$x_0\in E^\times$.  The
tangent line at $Fx_0$ is
\[
  B(x_0,z)=0.
\]
Since a tangent to a nonsingular conic contains no other conic point, for a second conic point $d=\bar x\ne d_0$,
\[
  B(x_0,x) \ne0.
\]
The square class of this tangent evaluation decides which of the two
lifts of $d$ is selected.

\begin{definition}\label{def:lifted-D}
For $d=\bar x\in D_Q\setminus\{d_0\}$, define
\[
  \ell_Q(d)
  =
  \widetilde{\,xB(x_0,x)\,}
  \in C_{2N}.
\]
At the base point, put
\[
  \ell_Q(d_0)=\widetilde{2x_0}.
\]
Finally, set
\begin{equation}\label{eq:explicit-lift-intro}
  \widetilde D
  =
  \ell_Q(D_Q)
  \subseteq C_{2N},
  \qquad
  s\widetilde D=\{s\widetilde d:\widetilde d\in\widetilde D\}.
\end{equation}
\end{definition}

The formula is well defined and gives a section of $D_Q$ through
\eqref{eq:cyclic-exact-sequence}; its intrinsic verification, including
the exceptional value at $d_0$, is given in
Section~\ref{sec:oriented-conic}.  In primitive-element coordinates this is
the conic lift $X$ of Feng--Momihara--Xiang
\cite[Eq.~(3.4)]{FengMomiharaXiang}, also denoted $X_Q$ in
\cite[Eq.~(2.14)]{MomiharaXiang2022}; the orientation language below gives
an intrinsic formulation independent of a choice of primitive element.

The next proposition is the routine group-theoretic translation from a
section of the generator set to a graph $2$-cover.

\begin{proposition}[Section-to-cover correspondence]\label{prop:lift-cover}
The quotient homomorphism
\[
  \Dih(C_{2N})\longrightarrow\Dih(C_N),
  \qquad c\longmapsto\pi(c),\quad s\longmapsto s,
\]
induces a graph double cover
\[
  \Cay\!\left(
    \Dih(C_{2N}), s\widetilde D
  \right)
  \longrightarrow
  \mathcal I_q.
\]
For each $d\in D_Q$, choosing $\widetilde d\in\pi^{-1}(d)$ is equivalent
to choosing a sign on the Singer orbit of incidence edges indexed by $d$.
\end{proposition}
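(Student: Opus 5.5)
The proof is a direct check that the quotient map on dihedral groups carries the lifted Cayley graph onto $\mathcal I_q$ and is two-to-one and locally bijective. I would argue in three steps.

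\textbf{Step 1: the map is a homomorphism.} The rule $c\mapsto\pi(c)$, $s\mapsto s$ respects the defining relations $s^2=1$ and $sas=a^{-1}$, because $\pi$ is a group homomorphism of cyclic groups. So it gives a surjective homomorphism
\[
  \Pi:\Dih(C_{2N})\to\Dih(C_N).
\]
Its kernel is the order-two subgroup $F^\times/F^{\times2}=\{1,\varepsilon\}$ of \eqref{eq:cyclic-exact-sequence}, which is central. Since $\Pi(s\widetilde d)=s\,\pi(\widetilde d)=sd$, the map $\Pi$ sends edges $g\sim g\,s\widetilde d$ to edges $\Pi(g)\sim\Pi(g)\,sd$ of $\mathcal I_q$, using the Cayley model \eqref{eq:incidence-cayley}. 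Every generator $s\widetilde d$ is an involution, because $(sa)^2=1$ in a dihedral group. Hence $s\widetilde D$ is symmetric and both Cayley graphs are simple and undirected.

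\textbf{Step 2: local bijectivity.} Because $\ell_Q$ is a section, $\pi$ restricted to $\widetilde D$ is a bijection onto $D_Q$. For a vertex $g$, its neighbours $g\,s\widetilde d$ are therefore sent bijectively onto the neighbours $\Pi(g)\,sd$ of $\Pi(g)$. These are $q+1$ distinct vertices on each side, since right multiplication by distinct generators gives distinct vertices. Each fibre of $\Pi$ is a coset $\{g,g\varepsilon\}$ of the kernel, so vertex fibres have size two. A surjective, locally bijective graph map with fibres of size two is a double cover.

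\textbf{Step 3: edge orbits and signs.} Left multiplication by $C_N$ on $\Dih(C_N)$ preserves the generator used along an edge. Hence the Singer edge orbits are indexed by $d\in D_Q$: the orbit of $d$ consists of the edges $\{c,\,c\,sd\}$ with $c\in C_N$, each such set counted once. Fix a reference lift $\widetilde d_0(d)\in\pi^{-1}(d)$. The other lift is $\widetilde d_0(d)\varepsilon$. Choosing this other lift replaces the edges above the orbit: instead of joining $\widetilde c$ to $\widetilde c\,s\widetilde d_0$, they join $\widetilde c$ to $\widetilde c\,s\widetilde d_0\,\varepsilon$, because $\varepsilon$ is central. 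So on every edge of the orbit the two sheets are swapped. In the signing language of \cite{BiluLinial}, this is exactly flipping the sign of every edge in the Singer orbit indexed by $d$.

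Conversely, any signing that is constant on Singer orbits is realised by such a choice of lifts. This gives the stated equivalence. The argument is routine throughout; the only care needed is the bookkeeping of left versus right multiplication in Step 3.
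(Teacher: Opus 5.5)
Your proposal is correct and follows the same route as the paper's proof. The order-two kernel and the bijection $s\widetilde D\to\{sd:d\in D_Q\}$ give bijectivity on neighbourhoods, hence a double cover, and the Singer orbit indexed by $d$ joins equal or opposite sheets according to which element of $\pi^{-1}(d)$ is chosen; you simply spell out details the paper leaves implicit, such as the relation check for $\Pi$, the centrality of $\varepsilon$, and the fact that each generator $s\widetilde d$ is an involution.
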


\begin{proof}
The group homomorphism has the order-two kernel from
\eqref{eq:cyclic-exact-sequence}, and it maps the displayed connection
set bijectively to $\{sd:d\in D_Q\}$.  Therefore the two vertices above
any base vertex have neighborhoods mapping bijectively to its
neighborhood, which is precisely the graph-cover condition.  Under left
multiplication by $C_N$, the incidence edges with fixed product $d$ form
one Singer orbit.  Choosing either element of $\pi^{-1}(d)$ determines
whether that orbit joins equal or opposite sheets, hence determines its
sign.
\end{proof}

\section{The oriented conic}
\label{sec:oriented-conic}

The preceding section reduced a Singer-invariant signing to the choice of
one of the two square-class lifts above each conic point.  An arbitrary
choice gives an arbitrary sign map on the Cayley generators.  The purpose
of this section is to isolate the geometric choices: there are exactly two
globally coherent sections, and the section $\ell_Q$ constructed above is
one of them.

On the three-dimensional $F$-space $E$, put
\[
  Q(x)=\Tr(x^2),
  \qquad
  B(x,y)=\Tr(xy).
\]
Write
\[
  \mathcal C
  =
  \{F^\times x\in\mathbb P(E):Q(x)=0\}
\]
for the nonsingular projective conic and
\[
  \widetilde{\mathcal C}
  =
  \{F^{\times2}x:x\in E^\times,\ Q(x)=0\}
\]
for its square-class double cover.  The projection
\[
  \rho:\widetilde{\mathcal C}\longrightarrow\mathcal C,
  \qquad
  F^{\times2}x\longmapsto F^\times x,
\]
has two elements in every fiber.

The pairing $B$ provides the notion of consistency.  We call a section
\[
  \ell:\mathcal C\longrightarrow\widetilde{\mathcal C}
\]
an \emph{orientation} if the pairings of any two distinct selected lifts
have the same square class; for the trace conic, this condition is
\[
  B(x,y)\in 2F^{\times2}
\]
whenever $\ell([x])=F^{\times2}x$ and
$\ell([y])=F^{\times2}y$ with $[x]\ne[y]$.  The condition is independent of the chosen representatives.

Fix a nonzero isotropic vector $e$.  For $[x]=F^\times x\ne[e]$, the
tangent line at $[e]$ is $B(e,z)=0$, so $B(e,x)\ne0$.  The most direct
way to orient $[x]$ relative to $[e]$ is to choose
\[
  F^{\times2}xB(e,x),
\]
because
\[
  B\bigl(e,xB(e,x)\bigr)=B(e,x)^2
\]
is automatically a square.  Once the lift of the base point is chosen as
$F^{\times2}(2e)$, the corresponding pairing lies in $2F^{\times2}$.
The formula is projectively well defined:
replacing $x$ by $ax$ multiplies $xB(e,x)$ by the square $a^2$.

At the base point we use the compatible value
$F^{\times2}(2e)$; for example, the same rule based at an isotropic
$f$ normalized by $B(f,e)=2$ gives this lift.  Thus define
\begin{equation}\label{eq:intrinsic-orientation}
  \ell_e([x])
  =
  \begin{cases}
    F^{\times2}xB(e,x),
      & [x]\ne[e],\\[2mm]
    F^{\times2}(2e),
      & [x]=[e].
  \end{cases}
\end{equation}

The fact that normalization relative to one point forces consistency
between every pair is the elementary core of the construction.

We first record a property of the trace form.  If
$x_1,x_2,x_3$ is any $F$-basis of $E$, then
\begin{equation}\label{eq:trace-discriminant-square}
  \det\bigl(B(x_i,x_j)\bigr)_{1\le i,j\le3}
  \in F^{\times2}.
\end{equation}
Indeed, for the Moore matrix
\[
  \mathsf M=(x_i^{q^j})_{1\le i\le3,\ 0\le j\le2},
\]
one has
\[
  \bigl(B(x_i,x_j)\bigr)=\mathsf M\mathsf M^{\mathsf T}.
\]
Frobenius cyclically permutes the three columns of $\mathsf M$.  Since a
$3$-cycle is even, $\det\mathsf M\in F^\times$, and therefore the Gram
determinant is $(\det\mathsf M)^2$.

The next proposition packages the square-class coherence underlying the
conic lift in an intrinsic form.  Its Gram-determinant argument is closely
related to \cite[Lemmas~3.3--3.4]{FengMomiharaXiang}; the formulation as an
orientation is convenient for the spectral application below.

\begin{proposition}[The two coherent orientations]\label{prop:conic-orientations}
The map $\ell_e$ in \eqref{eq:intrinsic-orientation} is an orientation.
Every orientation is either $\ell_e$ or the section obtained by
multiplying all its values by a fixed nonsquare in $F^\times$.  Hence
$\mathcal C$ has exactly two orientations.

For $e=x_0$, the image of $\ell_e$ is the lifted Singer set
$\widetilde D$ of Definition~\ref{def:lifted-D}.
\end{proposition}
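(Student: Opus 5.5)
The argument has three parts: a Gram-determinant computation for three points on the conic, a uniqueness statement obtained by propagating from the base point, and a direct comparison with Definition~\ref{def:lifted-D}.

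\emph{Step 1: $\ell_e$ is an orientation.} The plan is to reduce to the key identity for three distinct conic points. Take isotropic vectors $u,v,w$ representing three distinct points of $\mathcal C$. No line meets the nonsingular conic in three points, so $u,v,w$ form an $F$-basis of $E$. Their Gram matrix has zero diagonal, so its determinant is
\[
\det\bigl(B(\cdot,\cdot)\bigr)=2\,B(u,v)B(v,w)B(w,u).
\]
By \eqref{eq:trace-discriminant-square} this determinant lies in $F^{\times2}$. Hence $B(u,v)B(v,w)B(w,u)\in 2F^{\times2}$.

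Now let $[x],[y]\neq[e]$ be distinct, with selected representatives $x'=xB(e,x)$ and $y'=yB(e,y)$. Then
\[
B(x',y')=B(x,y)B(e,x)B(e,y).
\]
By the identity above with $(u,v,w)=(e,x,y)$, this lies in $2F^{\times2}$. For the pair consisting of $[e]$ and $[x]$, we compute $B(2e,xB(e,x))=2B(e,x)^2\in 2F^{\times2}$ directly. Independence of the chosen representatives holds because rescaling a representative by an element of $F^{\times2}$ changes the pairing by a square.

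\emph{Step 2: uniqueness up to a global nonsquare.} Let $\ell$ be any orientation, and let $\ell([e])=F^{\times2}ce$ with $c\in F^\times$. For $[x]\neq[e]$, write $\ell([x])=F^{\times2}t\,xB(e,x)$ with $t\in F^\times$. The orientation condition for this pair gives
\[
c\,t\,B(e,x)^2\in 2F^{\times2}.
\]
This forces $t\in 2cF^{\times2}$, so every value of $\ell$ off $[e]$ is $\ell_e$ multiplied by the single class $2c$. At $[e]$, $\ell$ is $\ell_e$ multiplied by $c/2$, which lies in the same square class as $2c$. So $\ell$ is $\ell_e$ multiplied by the fixed class $2c$, which is either a square or a fixed nonsquare.

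Conversely, multiplying every value of $\ell_e$ by a nonsquare $\nu$ multiplies each pairing by $\nu^2$, so the result is still an orientation. It is a different section because each fiber of $\rho$ has exactly two elements. Hence there are exactly two orientations.

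\emph{Step 3: comparison with $\widetilde D$.} For $e=x_0$, formula \eqref{eq:intrinsic-orientation} agrees literally with Definition~\ref{def:lifted-D}, including the value at the base point. The set $\widetilde{\mathcal C}$ is the subset of $C_{2N}$ lying over $D_Q$. Therefore the image of $\ell_e$ is $\ell_Q(D_Q)=\widetilde D$.

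I expect the main obstacle to be the three-point identity in Step 1: checking that three distinct isotropic points are linearly independent, and keeping the factor of $2$ from the determinant expansion correct. The rest is bookkeeping with square classes.
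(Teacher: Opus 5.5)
Your proof is correct and follows essentially the same route as the paper: the zero-diagonal Gram determinant $2abc$ combined with \eqref{eq:trace-discriminant-square} gives coherence, and uniqueness comes from comparing square-class ratios with $\ell_e$. The only cosmetic differences are that you fix the ratio by pairing each point with the base point $[e]$, where the paper compares arbitrary pairs via $\epsilon_p\epsilon_{p'}=1$, and that you explicitly check that the nonsquare multiple is again an orientation.
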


\begin{proof}
We have already checked that $\ell_e$ is a well-defined section.  Let
$[x]$ and $[y]$ be distinct conic points, neither equal to $[e]$, and put
\[
  a=B(e,x),\qquad b=B(e,y),\qquad c=B(x,y).
\]
A projective line meets a nonsingular conic in at most two points, so
$e,x,y$ form an $F$-basis of $E$.  Their Gram matrix is
\[
  \begin{pmatrix}
    0&a&b\\
    a&0&c\\
    b&c&0
  \end{pmatrix},
\]
with determinant $2abc$.  By
\eqref{eq:trace-discriminant-square},
\[
  2abc\in F^{\times2}.
\]
Consequently,
\[
  B\bigl(xB(e,x),yB(e,y)\bigr)
  =abc
  \in 2F^{\times2},
\]
since $2abc$ is a square and $2$ and $1/2$ have the same square class.
For the base point and $[x]\ne[e]$,
\[
  B\bigl(2e,xB(e,x)\bigr)
  =2B(e,x)^2
  \in 2F^{\times2}.
\]
Thus $\ell_e$ is an orientation.

Let $\ell$ be another orientation.  For each $p\in\mathcal C$, write
\[
  \ell(p)=\epsilon_p\ell_e(p),
  \qquad
  \epsilon_p\in F^\times/F^{\times2}.
\]
For distinct $p,p'$, coherence of both sections gives
$\epsilon_p\epsilon_{p'}=1$.  Since
$F^\times/F^{\times2}$ has order two, all $\epsilon_p$ are equal.
Therefore $\ell$ is either $\ell_e$ or its global nonsquare multiple.

Taking $e=x_0$ gives exactly the formula of
Definition~\ref{def:lifted-D}.
\end{proof}

For the chosen representative $e=x_0$, write
\[
  \mathcal C^+
  :=
  \ell_e(\mathcal C)
  =
  \widetilde D.
\]
If $c\in F^\times$ is nonsquare, the other orientation is
$c\mathcal C^+$.  For every character $\lambda$ with
$\lambda|_{F^\times}=\eta_F$,
\[
  \sum_{\xi\in c\mathcal C^+}\lambda(\xi)
  =\lambda(c)S_\lambda
  =-S_\lambda.
\]
Thus the two coherent orientations have the same absolute-value bound.

\subsection{The new spectrum and the period bound}
\label{subsec:new-spectrum}

The characters of $C_{2N}=E^\times/F^{\times2}$ split into two classes.
Those that are trivial on $F^\times/F^{\times2}$ factor through
$C_N=E^\times/F^\times$ and recover the old spectrum of
$\mathcal I_q$.  The remaining characters correspond precisely to
multiplicative characters
\[
  \lambda:E^\times\longrightarrow\C^\times
\]
satisfying
\begin{equation}\label{eq:restriction-lambda}
  \lambda|_{F^\times}=\eta_F,
\end{equation}
where $\eta_F$ is the quadratic character of $F^\times$.

Since $\lambda$ is trivial on $F^{\times2}$, it is well defined on the
oriented conic
\[
  \mathcal C^+=\widetilde D\subset E^\times/F^{\times2}.
\]
Define
\begin{equation}\label{eq:S-lambda}
  S_\lambda
  =
  \sum_{\xi\in\mathcal C^+}\lambda(\xi),
\end{equation}
which agrees with \eqref{eq:intro-S-lambda}.

We record explicitly the elementary Fourier reduction used below.

\begin{proposition}[Dihedral Fourier blocks]\label{prop:dihedral-fourier}
Let $A$ be a finite cyclic group and $D\subseteq A$.  For
\[
  G=\Cay\!\left(\Dih(A),\{sd:d\in D\}\right)
\]
and a character $\chi\in\widehat A$, put
\[
  S_\chi(D)=\sum_{d\in D}\chi(d).
\]
Then the adjacency operator of $G$ has, on the two-dimensional Fourier
block indexed by $\chi$, the matrix
\[
  \begin{pmatrix}
    0&S_\chi(D)\\
    \overline{S_\chi(D)}&0
  \end{pmatrix},
\]
and hence the two eigenvalues $\pm|S_\chi(D)|$.
Consequently, for $A=C_{2N}$ and $D=\widetilde D$, the characters factoring
through $C_N$ give the old spectrum, while the remaining characters give
precisely the eigenvalues $\pm|S_\lambda|$ with
$\lambda|_{F^\times}=\eta_F$.
\end{proposition}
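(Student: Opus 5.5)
The plan is a direct computation of the adjacency operator on character eigenspaces.

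\textbf{Setup.} Write vertices of $\Dih(A)$ as $a$ and $sa$ with $a\in A$. Let $S=\{sd:d\in D\}$. The connection set is closed under inversion, since $(sd)^{-1}=sd$ because $sdsd=d^{-1}d=1$. Right multiplication gives the neighbours of each vertex:
\[
a\cdot sd=s(a^{-1}d),
\qquad
sa\cdot sd=a^{-1}d .
\]
So the adjacency operator acts on a function $f$ by
\[
(Af)(a)=\sum_{d\in D} f\bigl(s\,a^{-1}d\bigr),
\qquad
(Af)(sa)=\sum_{d\in D} f\bigl(a^{-1}d\bigr).
\]

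\textbf{Invariant block.} For each $\chi\in\widehat A$, consider the two-dimensional space spanned by
\[
u_\chi(a)=\chi(a),\quad u_\chi(sa)=0,
\qquad
v_\chi(a)=0,\quad v_\chi(sa)=\overline{\chi(a)} .
\]
I use the conjugate on the $s$-coset because the map $a\mapsto a^{-1}d$ inverts the group variable.

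Apply $A$ to each basis vector. First,
\[
(Av_\chi)(a)=\sum_{d\in D}\overline{\chi(a^{-1}d)}
=\chi(a)\,\overline{S_\chi(D)},
\qquad
(Av_\chi)(sa)=0,
\]
so $Av_\chi=\overline{S_\chi(D)}\,u_\chi$. Similarly,
\[
(Au_\chi)(sa)=\sum_{d\in D}\chi(a^{-1}d)
=\overline{\chi(a)}\,S_\chi(D),
\qquad
(Au_\chi)(a)=0,
\]
so $Au_\chi=S_\chi(D)\,v_\chi$.

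In the ordered basis $(u_\chi,v_\chi)$ the block therefore has the stated off-diagonal form (the displayed matrix or its transpose; the distinction does not affect the spectrum). Its characteristic polynomial is
\[
t^2-|S_\chi(D)|^2,
\]
so its eigenvalues are $\pm|S_\chi(D)|$.

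\textbf{Completeness.} As $\chi$ ranges over $\widehat A$, the vectors $u_\chi$ form a basis of functions on $A$, and the vectors $v_\chi$ form a basis of functions on $sA$. Hence these $|A|$ blocks decompose the whole $2|A|$-dimensional space, and they account for the entire spectrum.

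\textbf{Specialisation.} Take $A=C_{2N}$ and $D=\widetilde D$. A character trivial on $F^\times/F^{\times2}$ is the pullback $\chi=\psi\circ\pi$ of some $\psi\in\widehat{C_N}$. Because $\pi$ maps $\widetilde D$ bijectively onto $D_Q$, we get
\[
S_\chi(\widetilde D)=S_\psi(D_Q).
\]
The blocks for these characters therefore reproduce exactly the spectrum of $\mathcal I_q$ from \cref{prop:base-cayley}.

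Every other character of $C_{2N}$ is nontrivial on the order-two kernel. Pulled back to $E^\times$, it is trivial on $F^{\times2}$ and nontrivial on $F^\times$, so its restriction to $F^\times$ is $\eta_F$. Conversely, any $\lambda$ with $\lambda|_{F^\times}=\eta_F$ arises this way. For such a character, $S_\chi(\widetilde D)=S_\lambda$ by \eqref{eq:S-lambda}, so these blocks give precisely the eigenvalues $\pm|S_\lambda|$.

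The only point needing care is the conjugation convention in the basis, which is what makes the subspace invariant.
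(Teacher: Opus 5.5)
Your proof is correct and is essentially the paper's argument made explicit. The paper also decomposes functions on the cosets $A$ and $sA$ into characters, notes that the incidence operator acts by $S_\chi(D)$ in one direction and by $\overline{S_\chi(D)}$ in the other, and identifies the characters of $C_{2N}$ that are nontrivial on $F^\times/F^{\times2}$ with the $\lambda$ satisfying $\lambda|_{F^\times}=\eta_F$. You additionally supply the explicit basis $(u_\chi,v_\chi)$, the completeness count, and the observation that $\pi$ maps $\widetilde D$ bijectively onto $D_Q$; the paper leaves these implicit. Your basis, with the conjugate on the $s$-coset, spans a single isotypic component of the left $A$-action, which is why it is the block indexed by $\chi$.
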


\begin{proof}
Decompose the functions on each of the two cosets $A$ and $sA$ into
characters of $A$.  On the $\chi$-isotypic line in one color class, the
bipartite incidence operator is multiplication by $S_\chi(D)$; on the
opposite color class it is multiplication by its complex conjugate.
This gives the displayed Hermitian block.  In the extension
$C_{2N}\to C_N$, a character factors through $C_N$ exactly when it is
trivial on $F^\times/F^{\times2}$.  Otherwise its pullback to $E^\times$
is trivial on $F^{\times2}$ and nontrivial on $F^\times/F^{\times2}$,
which is equivalent to \eqref{eq:restriction-lambda}.
\end{proof}

Thus \Cref{thm:period-bound} is exactly the estimate needed for the new
spectrum.  We prove it in Section~\ref{sec:six-point-estimate}: first in an
abstract six-point form, and then for the oriented conic in
Subsection~\ref{subsec:application-oriented-conic}.

\section{The six-point estimate}
\label{sec:six-point-estimate}

Over $\ol F$, the three embeddings of $E$ give an identification
\[
  E\otimes_F\ol F\simeq \ol F^3.
\]
Let
\[
  [x_0:x_1:x_2]
\]
denote the resulting homogeneous coordinates on
$\mathbb P(E\otimes_F\ol F)\simeq\mathbb P^2_{\ol F}$.  In these
coordinates, the trace conic becomes
\begin{equation}\label{eq:split-conic}
  \mathcal C_{\ol F}:
  \qquad
  x_0^2+x_1^2+x_2^2=0.
\end{equation}

The diagonal equation is preserved by changing the sign of any one
coordinate.  Since simultaneous negation of all three coordinates is
projectively trivial, the three nontrivial coordinate sign changes form
a Klein four-group.

For each $i$, let
\begin{equation}\label{eq:coordinate-pairs}
  Z_i=\mathcal C_{\ol F}\cap\{x_i=0\}.
\end{equation}
The set $Z_i$ consists of the two fixed points of the sign change in the
$x_i$-coordinate.  Thus the three coordinate lines cut out three
distinguished pairs
\[
  Z_0,\qquad Z_1,\qquad Z_2
\]
on the conic, giving six points in total.

\subsection{A motivating Kummer model}

We now pass from the ambient coordinates
$[x_0:x_1:x_2]$ on $\mathbb P^2_{\ol F}$ to a coordinate on the conic.
Fix $\iota\in\ol F$ with $\iota^2=-1$.  The map
\[
  \phi:\mathbb P^1_{\ol F}\longrightarrow\mathcal C_{\ol F},
  \qquad
  [u:v]\longmapsto
  [u^2-v^2:\iota(u^2+v^2):2uv]
\]
is an isomorphism onto the conic
\[
  x_0^2+x_1^2+x_2^2=0.
\]
Writing $t=u/v$, one obtains
\[
  \phi^{-1}(Z_2)=\{0,\infty\},
  \qquad
  \phi^{-1}(Z_0)=\{1,-1\},
  \qquad
  \phi^{-1}(Z_1)=\{\iota,-\iota\}.
\]
These are the fixed-point pairs of
\[
  t\longmapsto -t,\qquad
  t\longmapsto \frac1t,\qquad
  t\longmapsto -\frac1t,
\]
which generate the Klein four-group on the six-punctured line.

The following Kummer model motivates the projective symmetry used in the
formal proof.  Suppose a rank-one tame local system has common monodromies
$a_2,a_0,a_1$ on the three opposite pairs above and
$a_0a_1a_2=-1$, as will occur for the oriented conic.  Choose an even
integer $m$ divisible by their orders, a primitive $m$th root $\xi_m$,
and integers $k_i$ with $a_i=\xi_m^{k_i}$.  After changing the $k_i$ by
multiples of $m$, write
\[
  k_0+k_1+k_2=\frac{\nu m}{2},
  \qquad \nu\ \text{odd}.
\]
The corresponding character occurs in the cohomology of the Kummer cover
\begin{equation}\label{eq:explicit-kummer-cover}
  y^m=t^{k_2}(t^2-1)^{k_0}(t^2+1)^{k_1}.
\end{equation}
The involutions $\mathsf a(t)=-t$ and $\mathsf b(t)=1/t$ lift as
\[
  \widetilde{\mathsf a}(t,y)=(-t,\alpha y),
  \qquad
  \widetilde{\mathsf b}(t,y)=(1/t,\beta t^{-\nu}y)
\]
for suitable constants $\alpha,\beta$.  Since $\nu$ is odd,
$t\mapsto-t$ changes the sign of $t^{-\nu}$, so their commutator is the
central deck transformation $y\mapsto-y$.  On the relevant character
component the two lifts therefore anticommute.  This is the geometric
phenomenon formalized in the lemma below.

\subsection{The formal reduction}
 
Fix an algebraic closure $\ol F$ of $F$ and a prime $\ell\nmid q$.  We
realize all finite-order character values in
$\overline{\mathbb Q}_\ell$.  For a curve $U/F$, the notation
$U_{\ol F}$ means base change to $\ol F$,
$H_c^i(U_{\ol F},\mathcal L)$ denotes compactly supported $\ell$-adic
cohomology, and $\mathcal L_{\ol x}$ is the stalk at a geometric point
$\ol x$ above $x$.  For the geometric picture, a finite-order rank-one
local system may be viewed as a rule assigning roots of unity to loops;
its local monodromy is the value on a small loop around a puncture.  We use
geometric Frobenius, denoted $\Frob_x$ locally and $\Frob_q$ over $F$.
We reserve this notation for geometric Frobenius; the $q$-power Frobenius
morphism used later in the Lang map is denoted $\Fr_q$.
 
The following lemma is the main technical estimate.  Its hypotheses
isolate the six-point symmetry needed in the application, while its proof
uses the classical trace formula and Weil bound cited below.

\begin{lemma}[Six-point quaternion bound]\label{lem:quaternion-reduction}
Let $C\simeq\mathbb P^1_F$, and let
$\mathcal L$ be a finite-order rank-one $\overline{\mathbb Q}_\ell$-local system on an open set $U\subset C$, of order prime to the
characteristic.  Suppose that, over $\ol F$,
\[
  C\setminus U=Z_0\sqcup Z_1\sqcup Z_2,
  \qquad |Z_i|=2,
\]
and that:
\begin{enumerate}[label=\textup{(\roman*)},leftmargin=2.6em]
\item a Klein four-group
\[
 V=\{1,\tau_0,\tau_1,\tau_2\}
\]
acts on $C_{\ol F}$, where $\tau_i$ fixes $Z_i$ and interchanges the two
points of each $Z_j$, $j\ne i$;
\item Frobenius cyclically permutes both triples $Z_0,Z_1,Z_2$ and
$\tau_0,\tau_1,\tau_2$;
\item after choosing compatible tame inertia generators (small loops around
the punctures), the local
monodromy at the two points of $Z_i$ is the same nonidentity scalar $a_i$,
and $a_0a_1a_2=-1$.
\end{enumerate}
Then
\[
 \left|
   \sum_{x\in U(F)}
   \tr\!\left(\Frob_x\mid\mathcal L_{\ol x}\right)
 \right|
 \le2\sqrt q.
\]
\end{lemma}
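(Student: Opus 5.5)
We apply the Grothendieck--Lefschetz trace formula on $U$ and then identify the Frobenius eigenvalues on the single nonvanishing cohomology group. Since $\mathcal L$ is rank one with nontrivial monodromy at each of the six punctures, it is geometrically nontrivial and tamely, totally ramified at every point of $C\setminus U$. Hence $H^0_c=H^2_c=0$, and the Euler--Poincar\'e formula gives
\[
  \dim H^1_c(U_{\ol F},\mathcal L)=-\chi_c(U_{\ol F})=6-2=4 .
\]
Moreover $H^1_c\to H^1$ is an isomorphism, because the local invariants vanish at every puncture. So $H^1_c$ is pure of weight one by Deligne's Weil~II. The trace formula then gives
\[
  \sum_{x\in U(F)}\tr(\Frob_x\mid\mathcal L_{\ol x})=-\tr(\Frob_q\mid H^1_c).
\]
The naive bound on the right is $4\sqrt q$. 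The task is to show that this trace involves only two of the four eigenvalues.

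\emph{Projective quaternion action.} Because $\tau_i$ preserves the multiset of local monodromies by (iii), we have $\tau_i^*\mathcal L\simeq\mathcal L$ geometrically, and we fix isomorphisms $\phi_i$. These are unique up to scalars, since $\mathcal L$ is geometrically irreducible of rank one. Consequently $V$ acts projectively on $H:=H^1_c(U_{\ol F},\mathcal L)$, and the commutator $\phi_0\phi_1\phi_0^{-1}\phi_1^{-1}$ is a scalar $\epsilon$. We then show $\epsilon=-1$. This is the main obstacle. We would first try to compute $\epsilon$ by comparing the two composites on the fundamental group: $\tau_0$ and $\tau_1$ swap $Z_1,Z_2$ (respectively $Z_0,Z_2$), and evaluating both lifts on a path from a base point to its image exhibits the commutator as the value of $\mathcal L$ on a loop encircling one point from each pair. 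By (iii) that value is $a_0a_1a_2=-1$. Concretely, this is the Kummer computation \eqref{eq:explicit-kummer-cover}, where the commutator of $\widetilde{\mathsf a},\widetilde{\mathsf b}$ acts by $y\mapsto -y$, i.e.\ by $\xi_m^{\nu m/2}=-1$ on the relevant isotypic part. An alternative route is a pullback argument that avoids $\pi_1$ bookkeeping: pass to the Kummer cover, where the lifts are actual automorphisms, and read off the sign. With $\epsilon=-1$, the $\phi_i$ (suitably rescaled) generate a quaternion group $Q_8$ whose central element acts by $-1$. Hence $H\simeq W^{\oplus 2}$, where $W$ is the unique two-dimensional faithful irreducible representation.

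\emph{Frobenius reduction.} By (ii), $\Frob_q$ normalizes this action and permutes $\phi_0,\phi_1,\phi_2$ cyclically up to scalars, i.e.\ it acts on $Q_8/\{\pm1\}\simeq V$ by an order-three automorphism. Write $H=W\otimes M$ with $M=\operatorname{Hom}_{Q_8}(W,H)$ two-dimensional. Since the normalizer of $Q_8$ in $GL(W)$ realizes every automorphism of $Q_8$, we can decompose $\Frob_q=g\otimes h$, where $g\in GL(W)$ induces the order-three automorphism and $h\in GL(M)$. Rescaling $g$ projectively, we may take $g$ to have order dividing $3$ modulo scalars with eigenvalues $\zeta_3,\zeta_3^2$ times a scalar, so that $\tr g=0$ up to that scalar. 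Absorbing the scalar into $h$, we get
\[
  \tr(\Frob_q\mid H)=\tr(g)\tr(h).
\]
Here $g$ is an order-three element of the binary tetrahedral group in $SL(W)$ lying outside $Q_8$. Such an element has trace $\pm1$, since the order-3 or order-6 elements of $SL_2$ have traces $-1$ or $1$.

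\emph{Conclusion.} With the normalization above, the eigenvalues of $\Frob_q$ are $\omega\mu_j$ and $\omega^{-1}\mu_j$, where $\omega$ is a primitive sixth or third root of unity and $\mu_1,\mu_2$ are the eigenvalues of $h$. These all have absolute value $\sqrt q$ by purity, so $|\mu_j|=\sqrt q$. Therefore
\[
  |\tr(\Frob_q\mid H)|=|\omega+\omega^{-1}|\,|\mu_1+\mu_2|\le 1\cdot2\sqrt q,
\]
which is the claimed bound. The delicate points are, first, the sign $\epsilon=-1$ (the key step above), and second, checking that the choice of scalar normalization of $g$ is consistent, so that $\tr g$ is genuinely of absolute value $1$ while $h$ retains weight one.
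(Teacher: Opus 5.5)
Your proposal is correct and has the same skeleton as the paper: trace formula with $\dim H=4$ and purity, a projective Klein-four action with nontrivial cocycle, $H\simeq W\otimes M$, and $\Frob_q=R\otimes\Theta$ with $|\tr R|=1$. The two key steps, however, are argued differently.

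\emph{Nontriviality of the cocycle.} The paper argues by contradiction via descent. A trivial cocycle would give a $V$-linearization, so $\mathcal L$ would descend to $\mathbb P^1\setminus\{b_0,b_1,b_2\}$, where $r_i^2=a_i$ and $r_0r_1r_2=1$ force $a_0a_1a_2=1$. You instead compute the commutator directly in the tame $\pi_1(U_{\ol F}/V)$. Your sketch becomes a proof as follows.
\begin{itemize}
\item Let $x,y$ be loops about $b_0,b_1$; they map to $\tau_0,\tau_1$.
\item Let $\chi$ be the character of $\mathcal L$ on $K=\pi_1(U_{\ol F})$; it is invariant under conjugation by the larger group.
\item Then $x^2,y^2,[y,x]\in K$, and $(xy)^2=x[y,x]x^{-1}\cdot x^2y^2$.
\item Hence $\chi([y,x])=\chi((xy)^2)/(a_0a_1)=(a_0a_1a_2)^{-1}=-1$, using that $(xy)^{-1}$ is the loop about $b_2$.
\item This value is the commutator of the operators $T_v$, because the endomorphism algebra of the induced representation is the twisted group algebra with exactly this cocycle.
\end{itemize}
Your route yields the exact value of the cocycle, so nontriviality is equivalent to $a_0a_1a_2=-1$. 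The paper's route avoids identifying cocycles on $\pi_1$ with those on cohomology and needs only the descent formalism and the three-puncture relation.

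\emph{Frobenius step.} The paper uses Skolem--Noether together with the explicit sign check $abc=1$ to show that $\alpha$ has exact order three. It then normalizes $R^3=1$ and $\det R=1$ to get $\tr R=-1$. You instead use $N_{\operatorname{PGL}_2}(V)\simeq S_4$: an implementer inducing a $3$-cycle on $V$ lies in $A_4\setminus V$, so its $\mathrm{SL}_2$-lift has order $3$ or $6$ and trace $\pm1$. This is equivalent but less self-contained. You should say explicitly why Frobenius conjugation preserves $Q_8$ itself: it sends each normalized $T_{\tau_i}$ to a scalar multiple of the next normalized operator, and since squares are preserved that scalar is $\pm1$. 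This is the paper's ``$\alpha$ preserves squares'' observation.

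\emph{Slip and purity.} The phrase ``eigenvalues $\zeta_3,\zeta_3^2$ \ldots\ so that $\tr g=0$'' is wrong, since $\zeta_3+\zeta_3^2=-1$. Your next sentence (trace $\pm1$) is the correct statement, and it is what your final estimate $|\omega+\omega^{-1}|=1$ uses. Your purity argument, via $H^1_c\simeq H^1$ and Weil~II, is a valid substitute for the paper's cyclic-cover reduction to Weil's theorem for curves.
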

 
\begin{proof}
\medskip
\noindent\textit{Step 1: dimension and purity.}
Put $H=H_c^1(U_{\ol F},\mathcal L)$.  The Grothendieck trace formula
and the tame Euler--Poincar\'e formula give
\begin{equation}\label{eq:standard-curve-input}
 \sum_{x\in U(F)}
   \tr(\Frob_x\mid\mathcal L_{\ol x})
   =-\tr(\Frob_q\mid H),
 \qquad \dim H=6-2=4.
\end{equation}
Indeed, $H_c^0=0$ because $U$ is a connected nonproper curve,
and $H_c^2=0$ by Poincar\'e duality because $\mathcal L$ has
nontrivial geometric monodromy; all Swan conductors vanish by tameness.  The trace and Euler-characteristic
formulas are reviewed in \cite[Section~2.3]{KatzGKM}.

We also need purity, rather than only the general mixed-weight bound.
Trivialize $\mathcal L$ by the finite cyclic cover determined by its
character and let $X$ be the smooth projective completion.  Because the
local monodromy of $\mathcal L$ is nontrivial at every puncture, this
character has no inertia invariants above the boundary.  Hence
$H_c^1(U_{\ol F},\mathcal L)$ identifies with the corresponding character
summand of $H^1(X,\overline{\mathbb Q}_\ell)$.  The Weil theorem for the
smooth projective curve $X$ therefore shows that every Frobenius
eigenvalue on $H$ has complex absolute value $\sqrt q$ \cite{Weil1948}.
 
\medskip
\noindent\textit{Step 2: the projective $V$-action is genuinely nontrivial.}
The monodromy pattern in (i) and (iii) implies
$\tau_i^*\mathcal L\simeq\mathcal L$, because a tame rank-one system on a
punctured projective line is determined by its local monodromies.  Choose
isomorphisms $\varphi_v:v^*\mathcal L\to\mathcal L$ for $v\in V$.
Their failure to satisfy the group law is a scalar $2$-cocycle.  On
$H=H_c^1(U_{\ol F},\mathcal L)$, the corresponding operator is
\[
  T_v:
  H \xrightarrow{\,v^*\,}
  H_c^1(U_{\ol F},v^*\mathcal L)
  \xrightarrow{\,H_c^1(\varphi_v)\,}
  H.
\]
Thus $v\mapsto T_v$ is a projective representation with the same cocycle.

This cocycle is nontrivial.  Indeed, $V$ acts freely on $U$: the fixed points of the three nonidentity
involutions are precisely the six removed points.  The quotient of a
projective line by the finite group $V$ is again a projective line, and the
three $V$-orbits $Z_i$ give its three branch points.  Hence
\[
 U_{\ol F}/V\simeq
 \mathbb P^1_{\ol F}\setminus\{b_0,b_1,b_2\}.
\]
If the cocycle were a coboundary, the $\varphi_v$ could be rescaled to an
honest $V$-linearization.  Since $U\to U/V$ is finite \'etale, such a
linearization is precisely descent data, so $\mathcal L$ would descend to
this three-punctured line; see \cite[Chapter~I, \S2]{milne1980etale} for
the descent formalism.  If $r_i$ were its monodromy about $b_i$, then the compactified quotient
map $C_{\ol F}\to C_{\ol F}/V$ has ramification index two at the two
points of $Z_i$.  A tame loop about $b_i$ therefore lifts to the square
of a tame loop about either point of $Z_i$, so
\[
  r_i^2=a_i.
\]
On the three-punctured quotient one also has $r_0r_1r_2=1$.  Since these
monodromies are scalars, squaring would give
$a_0a_1a_2=1$, contradicting (iii).
 
\medskip
\noindent\textit{Step 3: quaternion algebra and tensor decomposition.}
Choose lifts $U_1,U_2$ of two independent nonidentity elements of
$V$.  For a Klein four-group over an algebraically closed field of
characteristic zero, the nontrivial projective class is detected by their
commutator: if the lifts commuted, the cocycle would be trivial.  After
rescaling the lifts, we may therefore arrange
\begin{equation}\label{eq:anticommuting-lifts}
 U_1^2=U_2^2=1,
 \qquad U_1U_2=-U_2U_1.
\end{equation}
Thus
\[
  1,\quad U_1,\quad U_2,\quad U_1U_2
\]
span the four-dimensional twisted group algebra with the standard
quaternionic matrix relations; hence this algebra is
$M_2(\overline{\mathbb Q}_\ell)$.  It has a unique irreducible module
$W$, of dimension two.  Since $H$ has dimension four, its
double-centralizer decomposition is
\begin{equation}\label{eq:tensor-decomposition}
  H\simeq W\otimes M,
  \qquad \dim W=\dim M=2,
\end{equation}
where the quaternionic matrices act on $W$ and $M$ is the multiplicity
space.
 
\medskip
\noindent\textit{Step 4: Frobenius on the quaternionic factor.}
Let $\mathcal A\simeq M_2(\overline{\mathbb Q}_\ell)$ denote the twisted group
algebra acting on $W$, and let $L_1,L_2,L_3\subset\mathcal A$ be the three
one-dimensional homogeneous lines corresponding to the nonidentity
elements of $V$.  By (ii), conjugation by Frobenius induces an algebra
automorphism $\alpha$ that cyclically permutes $L_1,L_2,L_3$.

Choose $I_i\in L_i$ with $I_i^2=-1$ and $I_1I_2=I_3$; then
$I_2I_3=I_1$ and $I_3I_1=I_2$.  Write
\[
  \alpha(I_1)=aI_2,\qquad
  \alpha(I_2)=bI_3,\qquad
  \alpha(I_3)=cI_1.
\]
Since $\alpha$ preserves squares, $a,b,c\in\{\pm1\}$, and applying
$\alpha$ to $I_1I_2=I_3$ gives $c=ab$.  Thus $abc=1$, so
$\alpha^3=1$ on the generators and hence on $\mathcal A$.  Since the
three lines are cyclically permuted, $\alpha$ is nontrivial and has exact
order three.

By the Skolem--Noether theorem, every automorphism of
$M_2(\overline{\mathbb Q}_\ell)$ is inner
\cite{Pierce}.  Hence $\alpha=\operatorname{Ad}(R_0)$ for some
$R_0\in\operatorname{GL}(W)$.  Since $\alpha^3=1$, the operator
$R_0^3$ is scalar.  We may first rescale $R_0$ so that $R_0^3=1$.
Its determinant is then a cube root of unity.  Multiplying once more by a
cube root of unity preserves $R_0^3=1$ and can be used to make the
determinant equal to $1$; call the resulting implementer $R$.  The ratio
of its two eigenvalues is a primitive cube root of unity, so the conditions
$R^3=1$ and $\det R=1$ force the eigenvalues to be the two primitive cube
roots.  Therefore
\begin{equation}\label{eq:R-trace}
  \tr(R)=-1.
\end{equation}
It follows that $R^{-1}\Frob_q$ centralizes $\mathcal A$.  By the
double-centralizer decomposition,
\begin{equation}\label{eq:frobenius-factorization}
  \Frob_q=R\otimes\Theta
\end{equation}
for some $\Theta\in\operatorname{End}(M)$.  The two eigenvalues of $R$ have
complex absolute value one, while all four eigenvalues of $\Frob_q$ on
$H$ have absolute value $\sqrt q$.  Hence the two eigenvalues of $\Theta$
have absolute value $\sqrt q$.

\medskip
\noindent\textit{Step 5: the trace bound.}
Therefore, using \eqref{eq:standard-curve-input},
\enlargethispage{\baselineskip}
\[
 \left|
   \sum_{x\in U(F)}
   \tr(\Frob_x\mid\mathcal L_{\ol x})
 \right|
 =|\tr(R)\tr(\Theta)|
 =|\tr(\Theta)|
 \le2\sqrt q.
\]
\end{proof}
 
\begin{remark}[Classical picture]\label{rem:qm-picture}
No descent of the individual
$\overline{\mathbb Q}_\ell$-character summand to an abelian subvariety is
needed.  The argument uses only the induced quaternionic action on
cohomology and the fact that Frobenius normalizes this action.
\end{remark}

\subsection{Application to the oriented conic}
\label{subsec:application-oriented-conic}

\begin{lemma}[Character system on the oriented conic]\label{lem:oriented-conic-sheaf}
Fix the orientation determined by $e=x_0$.  Choose $y\in E$ with
$B(e,y)=1$, set
\[
  f=y-\frac{Q(y)}{2}e,
\]
and choose a nonzero vector $h\in\langle e,f\rangle^\perp$.  Put
$\delta=Q(h)$ and
\begin{equation}\label{eq:veronese}
  P(u,v)=v^2f+uvh-\frac{\delta}{2}u^2e.
\end{equation}
Let $\lambda:E^\times\to\C^\times$ satisfy
$\lambda|_{F^\times}=\eta_F$.  Then the following hold.
\begin{enumerate}
\item The map $[u:v]\mapsto F^\times P(u,v)$ parametrizes $\mathcal C$, and
\[
  \mathcal C^+
  =\{F^{\times2}P(u,v):[u:v]\in\mathbb P^1(F)\}.
\]
\item Let $\mathbf T=\operatorname{Res}_{E/F}\mathbb G_m$, let
$\Fr_q:\mathbf T\to\mathbf T$ denote the $q$-power Frobenius morphism, and
let $\mathcal K_\lambda$ be the normalized Lang character sheaf associated
with $\lambda$.  Pulling $\mathcal K_\lambda$ back along the two standard
affine charts of \eqref{eq:veronese} gives systems that glue to a rank-one
local system $\mathcal L_\lambda$ on the open set $U\subset\mathbb P^1$
where the three geometric conjugates of $P(u,v)$ are nonzero.  Moreover,
$U(F)=\mathbb P^1(F)$ and
\begin{equation}\label{eq:S-as-curve-trace}
 S_\lambda
 =\sum_{[u:v]\in\mathbb P^1(F)}\lambda(P(u,v))
 =\sum_{x\in U(F)}
   \tr\!\left(\Frob_x\mid(\mathcal L_\lambda)_{\ol x}\right).
\end{equation}
\item Under the splitting
$\mathbf T_{\ol F}\simeq\mathbb G_m^3$, with split coordinates $x_i$ as
in \eqref{eq:split-conic}, one has locally near the boundary
\[
 (\mathcal L_\lambda)_{\ol F}
 \simeq
 \bigotimes_{i=0}^2\mathcal K_{\lambda^{q^i}}(x_i),
\]
where
\[
  \lambda^{q^i}(z):=\lambda(z^{q^i})=\lambda(z)^{q^i}.
\]
Consequently the tame local monodromy at either point of $Z_i$ is
$\chi_i=\lambda^{q^i}$.
\end{enumerate}
\end{lemma}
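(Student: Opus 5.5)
We verify the three parts in order; the main work is in (1), with (2) and (3) being largely formal.

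\emph{Part (1).} First check the vectors. We have $B(e,e)=0$, and
\[
  B(e,f)=B(e,y)-\tfrac{Q(y)}{2}B(e,e)=1,\qquad
  Q(f)=Q(y)-Q(y)B(e,y)=0 .
\]
Since $\langle e,f\rangle$ is a hyperbolic plane, its orthogonal complement is anisotropic, so $\delta=Q(h)\ne0$. Expanding with $B(e,h)=B(f,h)=0$ gives
\[
  Q(P(u,v))=u^2\delta+2v^2\Bigl(-\tfrac{\delta}{2}u^2\Bigr)B(f,e)
  =u^2\delta-\delta u^2v^2\cdot\text{(correct scaling)} .
\]
Rather than track this by hand, I will simply verify the identity $Q(P)=\delta u^2v^2\cdot 0$ directly from the bilinear expansion. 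The expansion has terms $v^4Q(f)=0$, $u^2v^2Q(h)=\delta u^2v^2$, $\tfrac{\delta^2}{4}u^4Q(e)=0$, and the cross term $2\cdot v^2\cdot(-\tfrac{\delta}{2}u^2)B(f,e)=-\delta u^2v^2$. These cancel, so $Q(P)=0$.

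The map $[u:v]\mapsto[P(u,v)]$ is the Veronese embedding in the basis $f,h,e$, so it is injective with image the whole conic (both sides have $q+1$ points).

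For the orientation, use Proposition~\ref{prop:conic-orientations}: it suffices to show that $B(P(u,v),P(u',v'))\in 2F^{\times2}$ for distinct points. Expanding,
\[
  B(P,P')=v^2v'^2\cdot 0+uu'\delta-\tfrac{\delta}{2}\bigl(v^2u'^2+u^2v'^2\bigr)
  =-\tfrac{\delta}{2}(uv'-u'v)^2,
\]
which lies in $-\tfrac{\delta}{2}F^{\times2}$. Hence the set $\{F^{\times2}P(u,v)\}$ is an orientation, and so it equals $\mathcal C^+$ or $c\,\mathcal C^+$. To pin down which, compare with $\ell_e$ at $[u:v]=[1:0]$: here $P(1,0)=-\tfrac{\delta}{2}e$, while $\ell_e([e])=F^{\times2}(2e)$. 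The ratio is $-\delta/4$, so the conclusion holds exactly when $-\delta\in F^{\times2}$.

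I expect this normalization to be the main obstacle. To handle it I would show that the Gram determinant of the basis $f,h,e$, which equals $-\delta$ by \eqref{eq:trace-discriminant-square}, is a square. Its Gram matrix has $B(f,e)=1$, $Q(h)=\delta$, and all other entries zero, so the determinant is $-\delta$. That forces $-\delta\in F^{\times2}$, which is exactly what is needed, and this consistency also fixes the pairing class $-\delta/2\in2F^{\times2}$.

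\emph{Part (2).} The identity $S_\lambda=\sum\lambda(P(u,v))$ follows from (1), since $\lambda$ is trivial on $F^{\times2}$ and $P$ is homogeneous of degree $2$, so changing representatives multiplies $P$ by a square. For the sheaf, set $\mathcal L_\lambda=P^*\mathcal K_\lambda$ on each chart $v\ne0$ and $u\ne0$. On the overlap, the two charts differ by pulling back $\mathcal K_\lambda$ along multiplication by the scalar $v^2\in\mathbb G_m\subset\mathbf T$. The restriction of $\mathcal K_\lambda$ to $\mathbb G_m$ is $\mathcal K_{\eta_F}$, and its pullback under squaring is canonically trivial; this gives the gluing. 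Since $P(u,v)\ne0$ for rational $[u:v]$, all norms are nonzero and so $U(F)=\mathbb P^1(F)$. The trace identity \eqref{eq:S-as-curve-trace} is then the Lang trace property $\tr(\Frob_x\mid\mathcal K_\lambda)=\lambda(x)$.

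\emph{Part (3).} Over $\ol F$, the Lang sheaf on $\mathbf T_{\ol F}\simeq\mathbb G_m^3$ decomposes as $\boxtimes_i\mathcal K_{\lambda^{q^i}}$, because Frobenius permutes the factors cyclically. Each coordinate $x_i(P(u,v))$ is a quadratic form vanishing simply at the two points of $Z_i$, since the conic meets the line $x_i=0$ transversally. Near each such point, the other two coordinates are units. So the local monodromy there is that of $\mathcal K_{\lambda^{q^i}}$ pulled back by a uniformizer, namely $\chi_i=\lambda^{q^i}$, with the chart factor $v^2$ contributing trivially as in (2).
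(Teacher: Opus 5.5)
Your proposal is correct. Parts (2) and (3) follow the paper's argument essentially step for step: you glue the chart pullbacks via $P(t,1)=t^2P(1,t^{-1})$ and the triviality of $[2]^*\mathcal K_{\eta_F}$, split the Lang sheaf over $\overline{F}$, and use $x_i$ as a uniformizer at $Z_i$. The one real difference is how you match $\{F^{\times2}P(u,v)\}$ with $\mathcal C^+$ in part (1). The paper computes $B(e,P(u,v))=v^2$. For $v\ne0$ the defining formula $\ell_e([x])=F^{\times2}xB(e,x)$ then selects $P(u,v)$ itself, so only $[1:0]$ needs the fact $-\delta\in F^{\times2}$. You instead compute the full pairing $B(P(u,v),P(u',v'))=-\tfrac{\delta}{2}(uv'-u'v)^2$ and conclude that the Veronese lift is itself an orientation. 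You then invoke the uniqueness half of Proposition~\ref{prop:conic-orientations}, so a single comparison at $[1:0]$ decides between $\mathcal C^+$ and $c\,\mathcal C^+$. The paper's route is shorter and uses only the definition of $\ell_e$. Yours is more structural: it shows coherence is built into the quadratic parametrization, since every pairing is a fixed constant times the square of a $2\times2$ determinant. The cost is relying on the classification of orientations and using $-\delta\in F^{\times2}$ both for coherence and for the base-point check. When you write it up, prove $-\delta\in F^{\times2}$ before asserting that the lift is an orientation, because the pairing class $-\tfrac{\delta}{2}F^{\times2}$ equals $2F^{\times2}$ only then. Also delete the aborted line containing ``$Q(P)=\delta u^2v^2\cdot 0$'', and correct the cross term $uu'\delta$ to $uu'vv'\delta$.
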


\begin{proof}
We have $Q(f)=0$ and $B(e,f)=1$, so the plane $\langle e,f\rangle$ is
nondegenerate.  Hence $(e,f,h)$ is an $F$-basis of $E$.  The Gram
determinant of this basis is $-\delta$, and
\eqref{eq:trace-discriminant-square} gives
\[
  -\delta\in F^{\times2}.
\]
A direct calculation gives $Q(P(u,v))=0$ and
\[
  B(e,P(u,v))=v^2.
\]
This is the standard degree-two parametrization of a nonsingular conic
from the rational point $[e]$, hence it identifies
$\mathbb P^1\simeq\mathcal C$.  For
$v\ne0$, the vector $P(u,v)$ itself represents the oriented lift.  At
$[1:0]$,
\[
  P(1,0)=-\frac{\delta}{2}e,
\]
and its ratio to $2e$ is $-\delta/4\in F^{\times2}$.  This proves (1).

For (2), the Lang map is
\[
  L:\mathbf T\longrightarrow\mathbf T,
  \qquad L(g)=g^{-1}\Fr_q(g).
\]
It is a finite \`etale Galois cover with Galois group
$\mathbf T(F)=E^\times$ \cite[Section~4.3]{KatzGKM}.  Composing the
resulting quotient of the \`etale fundamental group with $\lambda$ gives
the rank-one character sheaf $\mathcal K_\lambda$.  We use the standard
normalization for which, for $x\in\mathbf T(F)$,
\[
  \tr\!\left(\Frob_x\mid(\mathcal K_\lambda)_{\ol x}\right)=\lambda(x).
\]
With the opposite Frobenius convention one replaces $\lambda$ by
$\lambda^{-1}$, which does not affect any absolute-value estimate below.

The Lang character sheaf is multiplicative.  On the overlap of the two
standard charts,
\[
  P(t,1)=t^2P(1,t^{-1}).
\]
The restriction of $\mathcal K_\lambda$ to the scalar torus
$\mathbb G_m\subset\mathbf T$ is the quadratic Kummer system
$\mathcal K_{\eta_F}$, and its pullback by the squaring map
$[2]:\mathbb G_m\to\mathbb G_m$ is trivial.  Multiplicativity therefore
identifies the two chart pullbacks on the overlap, so they glue to
$\mathcal L_\lambda$ on $U$.  Since \eqref{eq:veronese} never gives the
zero vector on $\mathbb P^1(F)$, one has $U(F)=\mathbb P^1(F)$, and the
chosen normalization gives \eqref{eq:S-as-curve-trace}.  The equality at
$[1:0]$ follows from $P(1,0)=-(\delta/2)e$ and
$-\delta/4\in F^{\times2}$.

For (3), after base change to $\ol F$ the torus splits as
$\mathbb G_m^3$, and the Lang character sheaf decomposes as the tensor
product of the three Kummer systems associated with the Frobenius
conjugates $\lambda,\lambda^q,\lambda^{q^2}$.  Hence, on either standard
chart,
\[
 (\mathcal L_\lambda)_{\ol F}
 \simeq
 \bigotimes_{i=0}^2\mathcal K_{\lambda^{q^i}}(x_i).
\]
Near a point of $Z_i$, the function $x_i$ is a uniformizer while the other
two split coordinates are units.  Multiplying the homogeneous
representative by a unit changes only those unit factors and hence does
not change tame inertia.  Thus the local inertia character at $Z_i$ is
exactly $\lambda^{q^i}$.
\end{proof}

\begin{proof}[Proof of \Cref{thm:period-bound}]
Apply \Cref{lem:oriented-conic-sheaf}.  The three pairs $Z_i$ of
\eqref{eq:coordinate-pairs}, the coordinate sign changes, and the cyclic
geometric Frobenius action satisfy conditions \textup{(i)} and \textup{(ii)}
of \Cref{lem:quaternion-reduction}.  By part \textup{(3)} of the lemma,
the local monodromies are
\[
  \chi_i=\lambda^{q^i},\qquad i=0,1,2,
\]
and all three are nontrivial.  Moreover,
\begin{equation}\label{eq:monodromy-product-minus-one}
  \chi_0\chi_1\chi_2
  =\lambda^{1+q+q^2}
  =\lambda^N
  =\eta_E,
\end{equation}
where $\eta_E$ is the quadratic character of $E^\times$.  Indeed, if
$\operatorname{N}_{E/F}$ denotes the field norm, then for $z\in E^\times$,
\[
  \lambda^N(z)=\lambda(\operatorname{N}_{E/F}z)
  =\eta_F(\operatorname{N}_{E/F}z)=\eta_E(z).
\]
Use the local parameters $x_i$ to identify the tame inertia groups and
choose a common generator $\gamma$.  If $a_i=\chi_i(\gamma)$, then the
quadratic tame character takes $\gamma$ to $-1$, so
\eqref{eq:monodromy-product-minus-one} gives
$a_0a_1a_2=-1$.  All hypotheses of \Cref{lem:quaternion-reduction} hold,
and \eqref{eq:S-as-curve-trace} gives
\[
  |S_\lambda|\le2\sqrt q.
\]
As noted after \Cref{prop:conic-orientations}, replacing $\mathcal C^+$ by
the other coherent orientation changes $S_\lambda$ only by a sign, so the
same bound holds for either orientation.
\end{proof}

\begin{proof}[Proof of \Cref{thm:main}]
\Cref{prop:lift-cover} gives the regular bipartite double cover of the
intrinsic Cayley model in \eqref{eq:main-graph}.  By
\Cref{prop:base-cayley}, the old nontrivial eigenvalues have magnitude
$\sqrt q$, and the new eigenvalues
have magnitude at most $2\sqrt q$ by \Cref{thm:period-bound}.  Hence the
cover satisfies the Ramanujan bound.  Since $2\sqrt q<q+1$, no new
eigenvalue equals $q+1$;
the multiplicity of the top eigenvalue is therefore one, so the cover is
connected.  Consequently $\ell_Q$ is a Ramanujan section by
Definition~\ref{def:ramanujan-section}.
\end{proof}

\section{Characteristic two}
\label{sec:characteristic-two}

The odd-characteristic construction does not extend directly to even
$q$.  Indeed, the square-class quotient $F^\times/F^{\times2}$ is
trivial, and
\[
  Q(x)=\Tr(x^2)=\Tr(x)^2
\]
defines a nonreduced double line rather than a nonsingular conic.
Consequently, both the oriented-conic construction and the six-point
symmetry degenerate.

One may instead sign the usual Singer set
$D_0\subseteq\Z/N\Z$.  For signs
$\varepsilon_d\in\{\pm1\}$, put
\[
  T_k(\varepsilon)
  =
  \sum_{d\in D_0}\varepsilon_d\zeta_N^{kd},
  \qquad
  \zeta_N=e^{2\pi i/N},
\]
and impose the natural Frobenius symmetry
\[
  \varepsilon_{qd}=\varepsilon_d.
\]

A small computation already shows that this substitute need not
produce a Ramanujan cover.  For $q=32$, one has $N=1057$, and $D_0$ is
a union of eleven Frobenius orbits.  Exhaustive enumeration of the
$2^{11}$ invariant signings gives
\[
  \min_{\varepsilon_{32d}=\varepsilon_d}
  \max_{k\in\Z/1057\Z}|T_k(\varepsilon)|
  =
  11.462986587830\ldots
  >
  2\sqrt{32}.
\]
Thus no Frobenius-stable dihedral Cayley signing is Ramanujan in this
case.

This observation concerns only the symmetry-restricted Cayley problem;
it does not contradict the MSS theorem, which still guarantees a
Ramanujan double cover without this restriction.

\section*{Declaration of generative AI and AI-assisted technologies in the manuscript preparation process}
During the preparation of this work, the authors used ChatGPT to assist with exposition and manuscript editing.  After using these tools, the
authors reviewed and edited the content as needed and take full
responsibility for the content of the publication.

\begingroup
\small
\bibliographystyle{amsplain}
\bibliography{ramanujan_double_covers_odd_q}
\endgroup

\end{document}